\documentclass[11pt,a4paper]{article}
\usepackage[latin1]{inputenc}
\usepackage{amsthm, amssymb, amscd}
\usepackage{amsmath, amsfonts}
\usepackage{hyperref}
\hypersetup{colorlinks=true, citecolor=blue, urlcolor=blue,linkcolor=blue}

\oddsidemargin=-.5cm
\topmargin=-2cm
\textwidth=17cm
\textheight=25cm

\newtheorem{theorem}{Theorem}
\newtheorem{lemma}{Lemma}
\newtheorem{proposition}{Proposition}
\newtheorem{corollary}[theorem]{Corollary}
\newtheorem{remark}{Remark}

\newcommand{\var}{\mbox{\rm Var}}
\newcommand{\cov}{\text{cov}}

\newcommand{\Esp}{\mathbb E}

\def\R{\mathbb R}

\def\indicator{\mathbf{1}}

\author{
Federico Dalmao\thanks{
Departamento de Matem\'{a}tica y Estad\'{i}stica del Litoral, 
Universidad de la Rep\'{u}blica, A.P. 50000, Salto, Uruguay. 
E-mail: fdalmao@unorte.edu.uy.}
\thanks{Mathematical Research Unit, Universit\'e du Luxembourg, 
6 rue Coudenhove-Kalergi, L-1359, Luxembourg, Luxembourg.}}
\title{CLT for the zeros of Kostlan Shub Smale random polynomials}
\begin{document}
\maketitle
\begin{abstract}
In this paper we find the asymptotic main term of the variance of the number of roots 
of Kostlan-Shub-Smale random polynomials and prove a central limit theorem for the number of roots as the degree goes to infinity.\\
\begin{center}{\bf Resum\'e}\end{center}
Dans ce papier nous trouvons le terme asymptotique dominant de la variance du nombre de racines r\'{e}els des polyn\^{o}mes al\'{e}atoires de Kostlan-Shub-Smale et d\'{e}montrons un 
th\'{e}or\`{e}me de la limite centrale pour ce nombre de racines.
\end{abstract}

Consider the Kostlan-Shub-Smale  (KSS for short) ensemble of random polynomials:
\begin{equation*}
X_d(x):=\sum^d_{n=0}a^{(d)}_n\,x^n;\quad x\in\R,
\end{equation*}
where $d$ is the degree of the polynomial and the coefficients $(a^{(d)}_n)$ are independent centered Gaussian random variables 
whose variances are the binomial coefficients, more precisely $\var(a^{(d)}_n)=\binom{d}{n}$. 

Denote by $N_d$ the number of real roots of $X_d$, that is
\begin{equation*}
N_d:=\#\{x\in\R:X_d(x)=0\}.
\end{equation*}
It is well known that $\Esp(N_{d})=\sqrt{d}$ \cite{ke,ss}. 
The aim of this paper is to prove the following result.
\begin{theorem}\label{teo}
The variance of the number of real roots $N_d$ of KSS random polynomials verifies 
$$
\lim_{d\to\infty}\frac{\var(N_d)}{\sqrt{d}}=\sigma^2,
$$
with $0<\sigma^2<\infty$ given in Proposition \ref{prop} ($\sigma^2\approx 0.57\dots$).
Furthermore, $N_d$ verifies the CLT
\begin{equation*}
\frac{N_d-\sqrt{d}}{d^{1/4}}\mathop{\Rightarrow}\limits_{d\to\infty} N(0;\sigma^2).
\end{equation*}
\end{theorem}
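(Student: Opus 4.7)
The plan is to reduce the problem to a stationary Gaussian process on a compact interval and then analyze the zero count via a Wiener--It\^o chaos expansion in the spirit of Kratz--Le\'on and Az\"ais--Le\'on.

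First, I would stationarize and rescale. The covariance of $X_d$ is $\Esp[X_d(x)X_d(y)]=(1+xy)^d$, and after normalization this becomes $\cos^d(s-t)$ under the substitution $x=\tan s$, $y=\tan t$. Hence the process $Y_d(t):=X_d(\tan t)/(1+\tan^2 t)^{d/2}$ is centered stationary Gaussian on $(-\pi/2,\pi/2)$ with covariance $\cos^d(s-t)$, and since $\tan$ is a diffeomorphism its zero count coincides with $N_d$. The second rescaling $\tilde Y_d(u):=Y_d(u/\sqrt d)$ yields a process on $I_d:=(-\sqrt d\,\pi/2,\sqrt d\,\pi/2)$ whose covariance $\cos^d(\tau/\sqrt d)$ converges to $e^{-\tau^2/2}$ uniformly on compacts and is Gaussian-dominated on a macroscopic neighbourhood of zero. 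This scaling is consistent with $\Esp(N_d)=\sqrt d$ and points to a variance of order $\sqrt d$.

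Second, I would use the Kac--Rice representation $N_d=\int|Y_d'(t)|\,\delta_0(Y_d(t))\,dt$ (approximated by $\delta_\eps$ with $\eps\to 0$) and expand the integrand in the Hermite basis of the standardized Gaussian vector $(Y_d(t),Y_d'(t)/\sqrt{\var(Y_d'(t))})$, which is uncorrelated at each $t$ by stationarity. This yields $N_d-\Esp(N_d)=\sum_{q\geq 2}I_q^{(d)}$, where $I_q^{(d)}$ lies in the $q$-th Wiener chaos and the sum starts at $q=2$ since the $(0,0)$ coefficient produces the Kac--Rice first-moment formula and all odd-odd terms vanish by parity. By orthogonality $\var(N_d)=\sum_q \var(I_q^{(d)})$, and after rescaling each summand takes the form $\sqrt d\cdot\int_{I_d}\Psi_q\!\left(\cos^d(\tau/\sqrt d)\right)d\tau$ plus boundary corrections, where $\Psi_q$ is an explicit analytic function given by Hermite coefficients of $|\cdot|$ and $\delta_0$. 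Dominated convergence, using the Gaussian local bound and the exponential decay of $\cos^d$ away from integer multiples of $\pi$, then gives $\var(I_q^{(d)})/\sqrt d\to\sigma_q^2$ and $\var(N_d)/\sqrt d\to\sigma^2=\sum_{q\geq 2}\sigma_q^2\in(0,\infty)$, which would identify the constant of Proposition~\ref{prop}.

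Third, for the CLT I would apply the Nualart--Peccati fourth moment theorem chaos by chaos: for each fixed $q$, $I_q^{(d)}/d^{1/4}$ converges in law to $\mathcal N(0,\sigma_q^2)$ once the variance converges and the normalized contractions vanish, and these contractions reduce to iterated integrals of $\cos^d(\cdot/\sqrt d)$ converging to iterated Gaussian integrals. Joint convergence of a finite truncation $\sum_{q=2}^Q I_q^{(d)}/d^{1/4}$ to the independent sum $\mathcal N(0,\sum_{q=2}^Q\sigma_q^2)$, combined with a uniform-in-$d$ bound on the tail $\sum_{q>Q}\var(I_q^{(d)})/\sqrt d$, then yields the full CLT by a standard truncation argument. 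The main obstacle, in my view, is precisely this uniform tail control: the Kac--Rice integrand is singular on the diagonal because the joint law of $(Y_d(s),Y_d(t))$ degenerates as $s\to t$, and although the local scaling $\cos^d(\tau/\sqrt d)\to e^{-\tau^2/2}$ turns this into an integrable singularity with constants independent of $d$, making the $q$-dependence precise requires careful conditional-moment bounds on $|Y_d'(t)|$ given $Y_d(t)=0$, uniform in $d$.
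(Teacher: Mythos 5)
Your overall skeleton (stationarization to a process with covariance $\cos^d(s-t)$, rescaling by $\sqrt d$ so the covariance tends to $e^{-\tau^2/2}$, chaos expansion, Fourth Moment Theorem plus truncation) is exactly the route the paper takes; the tangent substitution you use is equivalent to the paper's homogenization onto $S^1$. But there is a genuine gap at the point you yourself flag as ``the main obstacle'': you propose to obtain $\var(N_d)/\sqrt d\to\sigma^2=\sum_{q\ge2}\sigma_q^2$ by proving $\var(I_q^{(d)})/\sqrt d\to\sigma_q^2$ chaos by chaos and then summing, and separately you need $\sup_d\sum_{q>Q}\var(I_q^{(d)})/\sqrt d\to0$ as $Q\to\infty$ for the truncation step of the CLT. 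Both require an interchange of limit and infinite sum over $q$, i.e.\ bounds on the Hermite coefficients and conditional moments that are uniform in $d$ \emph{and} summable in $q$; you do not supply these, and obtaining them directly is hard. The paper circumvents this entirely: it computes $\var(N_d)$ by the two-point Rice formula for $\Esp(N_d(N_d-1))$ (Cram\'er--Leadbetter), proving $\var(N_d)/\sqrt d\to\sigma^2<\infty$ by dominated convergence of the explicit integrand $g_d(t)\bigl[\sqrt{1-\rho_d^2(t)}+\rho_d(t)\arctan(\rho_d(t)/\sqrt{1-\rho_d^2(t)})\bigr]$ without any chaos decomposition. This is the Kratz--Le\'on method: once the \emph{total} normalized variance converges and each finite partial sum of chaos variances converges, orthogonality forces the tail $\sum_{q>Q}\var(I_q^{(d)})/\sqrt d$ to be uniformly small, with no $q$-uniform estimates needed. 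Without an independent computation of the total variance (or some substitute for it), your argument does not close.

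Two smaller points: the theorem asserts $\sigma^2>0$, which you state but never prove; in the paper this follows by checking $\var(I_2)>0$ explicitly (as in Aza\"is--Wschebor), and your framework would need the analogous verification that some $\sigma_q^2>0$. Also note that the diagonal singularity of the Kac--Rice integrand you worry about is handled in the paper by an explicit expansion of $1-\cos^{2d}(t/\sqrt d)$ and of the conditional variance near $t=0$, showing the integrand is $O(t)$ there uniformly in $d$; this is a computation, not an obstruction, once the variance is attacked via Rice rather than through the chaoses.
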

The number of roots of random polynomials has been under the attention 
of physicists and mathematicians for a long time. 
The first results for particular choices of the coefficients  
are due to Bloch and Polya \cite{bp} in 1932.  
After many successive improvements and generalisations, in 1974 Maslova \cite{ma} stated the CLT for the number of zeros 
for polynomials with i.i.d. centered coefficients with finite variance. 
For related results see the review by Bharucha-Reid and Sambandham \cite{bs} or the Introduction in \cite{adl} and references therein. 

The study of Kostlan-Shub-Smale ($m\times m$ systems) of polynomials started 
in the early nineties  
by Kostlan \cite{ke}, Bogomolny, Bohigas and Lob{\oe}uf \cite{bbl} and Shub and Smale \cite{ss}. 
The mean number of roots \cite{ss,ke}, some asymptotics as $m\to\infty$ for the variance 
\cite{aw2} and 
for the probability of not having any zeros on intervals \cite{sm} are known.
See also the review by Kostlan \cite{k2} and references therein.

We restrict our attention to the case $m=1$. The mean number of real roots 
is $\sqrt{d}$. 
This fact shows a remarkable difference with the polynomials with i.i.d. centered coefficients 
which asymptotic mean number of roots is $2\log(d)/\pi$. 


%
Our tools are the Rice formulas for the (factorial) moments of the number of roots \cite{aw,cl}; 
Kratz-Le\'on's version of the chaotic expansions for the number of zeros \cite{kl97}, Kratz-Le\'on method \cite{kl01} 
and the Fourth Moment Theorem \cite{pt}. 
This method has been applied to trigonometric polynomials 
by 
Aza\"is, Dalmao and Le\'on \cite{adl}.

The paper is organised as follows. 
Section 1 contains some preliminaries and sets the problem in a more convenient way. 
Section 2 deals with the asymptotic behaviour of the variance of $N_d$. 
In Section 3 the asymptotic normality of the standardised $N_d$ is obtained.
Section 4 contains the proofs of some auxiliary lemmas. 

\section{Preliminaries}
By the binomial theorem, the covariance of the KSS polynomials is
\begin{equation*}
\cov(X_d(x),X_d(y))=\sum^d_{n=0}\binom{d}{n}x^ny^n=(1+xy)^d.
\end{equation*}
This fact suggest to 
homogenise the polynomials, that is, 
to introduce an auxiliary variable $x_0$ and to consider the polynomials
\begin{equation*}
X^0_d(x_0,x)=\sum^d_{n=0}a_nx^nx_0^{d-n};
\end{equation*}
with $x_0,\,x\in\R$. The polynomial $X^0_d$ is homogeneous, 
that is $X^0_d(\lambda x_0,\lambda x)=\lambda^dX^0_d(x_0,x)$ 
for any $\lambda;\,x_0,\,x\in\R$. 
Therefore, we can think of $X^0_d$ as acting on the unit circumference $S^1$. 
The covariance in this case gives
\begin{equation*}
\cov(X^0_d(x_0,x),X^0_d(y_0,y))=\sum^d_{n=0}\binom{d}{n}(xy)^n(x_0y_0)^{d-n}=(xy+x_0y_0)^d
=\left\langle(x_0,x),(y_0,y)\right\rangle^d,
\end{equation*}
where $\left\langle,\right\rangle$ stands for the usual inner product in $\R^2$.

Furthermore, denoting by $N_Y(I)$ the  number of zeros of the process $Y$ on the set $I$; 
it is easy to see that 
$$
2N_d(\R)=N_{X^0_d}(S^1).
$$

Now, it is convenient to write $X^0_d$ again as a polynomial in one variable by 
identifying $(x_0,x)\in S^1$ with the pair $(\sin(t),\cos(t))$ for some real $t$:
\begin{equation*}
Y_d(t):=\sum^d_{n=0}a_n\cos^n(t)\sin^{d-n}(t),
\end{equation*}
with real $t$. 
It is easy to see that
\begin{equation*}
N_d(\R)=N_{Y_d}([0,\pi])\textrm{ almost surely.}
\end{equation*}
In fact, $x$ is a real root of $X_d$ if and only if 
the radial projections of $(1,x)$ onto $S^1$, once identified with a point $(\sin(t),\cos(t))$, 
are roots of $Y_d$; one of these projections correspond to $t\in[0,\pi]$.

Direct computations show that $Y_d$ is a centered stationary Gaussian process  
and that, for $s,t\in[0,\pi]$, its covariance function is given by  
\begin{equation*}
\cov(Y_d(s),Y_d(t))=\left\langle(\cos(s),\sin(s)),(\cos(t),\sin(t))\right\rangle^d
=\cos^d(t-s).
\end{equation*}
Furthermore,
\begin{equation*}
\cov(Y_d(s),Y^\prime_d(t))=-d\cos^{d-1}(t-s)\sin(t-s),
\end{equation*}
and 
\begin{equation*}
\cov(Y^\prime_d(s),Y^\prime_d(t))=-d(d-1)\cos^{d-2}(t-s)\sin^2(t-s)+d\cos^d(t-s).
\end{equation*}
In particular, $\var(Y_d(t))=1$ and $\var(Y^\prime_d(t))=d$ for all $t$. \\

\noindent{\bf Expectation of the number of roots of $Y_d$:}\\
Since the process $Y_d$ is stationary and for each fixed $t$ the random variables $Y_d(t)$ and $Y^\prime_d(t)$ 
are independent centered Gaussian with variances $1$ and $d$ respectively, 
using Rice formula \cite{aw}, we have
\begin{equation*}
\Esp\left(N_{Y_d}[0,\pi]\right)=\pi\cdot\Esp|Y^{\prime}_d(0)|\cdot p_{Y_d(0)}(0)
=\pi\cdot\sqrt{\frac{2}{\pi}}\sqrt{d}\cdot\frac{1}{\sqrt{2\pi}}=\sqrt{d}.
\end{equation*}

\noindent{\bf Time scale and covariance limit:}\\
The next step is to scale the time in order to get a limit behaviour for the covariances. 
It is convenient to use the unit speed parametrisation, 
so we define
\begin{equation*}
Z_d(t):=Y_d\left(\frac{t}{\sqrt{d}}\right).
\end{equation*}
The number of real roots of $X_d$ coincides almost surely with that 
of $Z_d$ in $[0,\sqrt{d}\pi]$,  
that is
\begin{equation*}
N_{d}(\R)=N_{Z_d}([0,\sqrt{d}\pi])\textrm{ almost surely}.
\end{equation*}

From now on, we restrict the process $Z_d$ 
to the interval $[0,\sqrt{d}\pi]$. 
Since $Y_d$ is stationary, so is $Z_d$. 
Let us denote by $r_d:[-\sqrt{d}\pi,\sqrt{d}\pi]\to\R$ the covariance function of $Z_d$, 
that is, $r_d(t)=\cov\left(Z_d(0),Z_d(t)\right)$. 
It follows that
\begin{align}\label{eq:ryderivadas}
r_d(t)&=\cos^d\left(\frac{t}{\sqrt{d}}\right),\notag \\
r^{\prime}_d(t)&=-\sqrt{d}\cos^{d-1}\left(\frac{t}{\sqrt{d}}\right)\sin\left(\frac{t}{\sqrt{d}}\right),\\
r^{\prime\prime}_d(t)&=(d-1)\cos^{d-2}\left(\frac{t}{\sqrt{d}}\right)\sin^2\left(\frac{t}{\sqrt{d}}\right)
-\cos^{d}\left(\frac{t}{\sqrt{d}}\right).\notag
\end{align}

\begin{remark}
Note that $r_d$ is an even function 
and for $t\in[0,\sqrt{d}\pi/2]$ we have $r_d(\sqrt{d}\pi-t)=(-1)^dr_d(t)$. 
This will imply that it suffices to deal with $r_d$ restricted to the interval $[0,\sqrt{d}\pi/2]$, 
as we shall see in Lemma \ref{lemma:Rice}.
\end{remark}

\section{Asymptotic variance}
We need to prepare some preliminaries.
\begin{lemma}\label{lemma:Rice}
We have
\begin{equation}\label{eq:Rice}
\Esp(N_d(N_d-1))
=\frac{2\sqrt{d}}{\pi}\int^{\sqrt{d}\pi/2}_{0}
g_d(t)\left[\sqrt{1-\rho^2_d(t)}+\rho_d(t)\arctan\left(\frac{\rho_d(t)}{\sqrt{1-\rho^2_d(t)}}\right)\right]dt+1.
\end{equation}
Here  
$g_d(t)=2\pi p_d(t)v_d(t)$, being 
$p_d(t)=p_{Z_d(0),Z_d(t)}(0,0)$ the joint density of $Z_d(0),Z_d(t)$ evaluated at $(0,0)$;  
$v_d(t)$ the conditional variance of $Z^\prime_d(0)$ 
(and of $Z^\prime_d(t)$) conditioned to $Z_d(0)=Z_d(t)=0$ 
and $\rho_d(t)$ the conditional correlation between the derivatives 
$Z^\prime_d(0)$ and of $Z^\prime_d(t)$
conditioned to $Z_d(0)=Z_d(t)=0$. 
We have
\begin{align*}
v_d(t)&=1-\frac{d\cos^{2d-2}\left(\frac{t}{\sqrt{d}}\right)
\sin^2\left(\frac{t}{\sqrt{d}}\right)}{1-\cos^{2d}\left(\frac{t}{\sqrt{d}}\right)},\\
p_d(t)&=\frac{1}{2\pi\sqrt{1-\cos^{2d}\left(\frac{t}{\sqrt{d}}\right)}},\\
\rho_d(t)&=\cos^{d-2}\left(\frac{t}{\sqrt{d}}\right)
\frac{1-d\sin^2\left(\frac{t}{\sqrt{d}}\right)-\cos^{2d}\left(\frac{t}{\sqrt{d}}\right)}
{1-\cos^{2d}\left(\frac{t}{\sqrt{d}}\right)
-d\cos^{2d-2}\left(\frac{t}{\sqrt{d}}\right)\sin^2\left(\frac{t}{\sqrt{d}}\right)}.
\end{align*}
\end{lemma}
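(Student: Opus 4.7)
The plan is to apply the Kac-Rice formula for the second factorial moment to $N_d=N_{Z_d}([0,T])$ with $T=\sqrt d\,\pi$, and then collapse the resulting double integral to the claimed one via stationarity and the reflection symmetry $t\mapsto T-t$. Since $Z_d$ is $C^1$ and its finite-dimensional laws are non-degenerate off the diagonal, the Kac-Rice formula \cite{aw,cl} yields
\begin{equation*}
\Esp(N_d(N_d-1))=\int_{[0,T]^2}\Esp\bigl(|Z'_d(s)\,Z'_d(t)|\mid Z_d(s)=Z_d(t)=0\bigr)\,p_{Z_d(s),Z_d(t)}(0,0)\,ds\,dt.
\end{equation*}

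By stationarity the integrand depends only on $u=t-s$. I will carry out the Gaussian regression explicitly: the $4\times 4$ covariance matrix of $(Z_d(s),Z_d(t),Z'_d(s),Z'_d(t))$ is assembled from $r_d,r'_d,r''_d$ via \eqref{eq:ryderivadas}, and a Schur complement computation shows that, conditionally on $Z_d(s)=Z_d(t)=0$, the vector $(Z'_d(s),Z'_d(t))$ is centered bivariate Gaussian with common variance $v_d(u)$ and correlation $\rho_d(u)$ as in the statement; the density $p_d(u)=1/(2\pi\sqrt{1-r_d(u)^2})$ is immediate from the bivariate Gaussian formula. Using the classical identity $\Esp|UV|=\tfrac{2v}{\pi}\bigl(\sqrt{1-\rho^2}+\rho\arcsin\rho\bigr)$ for a centered Gaussian pair with common variance $v$ and correlation $\rho$, together with $\arcsin\rho=\arctan(\rho/\sqrt{1-\rho^2})$, the Kac-Rice integrand reduces to $F(u)=v_d(u)\,p_d(u)\cdot\tfrac{2}{\pi}\bigl[\sqrt{1-\rho_d^2(u)}+\rho_d(u)\arctan(\rho_d(u)/\sqrt{1-\rho_d^2(u)})\bigr]$.

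Finally, stationarity gives $\int_{[0,T]^2}F(|t-s|)\,ds\,dt=2\int_0^T(T-u)F(u)\,du$. The symmetry flagged in the remark, $r_d(T-u)=(-1)^d r_d(u)$, transfers to $v_d(T-u)=v_d(u)$, $p_d(T-u)=p_d(u)$ and $\rho_d(T-u)=(-1)^d\rho_d(u)$; since $\sqrt{1-\rho^2}+\rho\arctan(\rho/\sqrt{1-\rho^2})$ is an even function of $\rho$, the integrand $F$ is symmetric about $T/2$, whence $2\int_0^T(T-u)F(u)\,du=2T\int_0^{T/2}F(u)\,du$. Substituting $T=\sqrt d\,\pi$ and $g_d=2\pi p_d v_d$ produces the displayed integral. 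The step I expect to be the most delicate, and the source of the additive constant $+1$, is a careful boundary/diagonal bookkeeping: the identity $Z_d(T)=(-1)^d Z_d(0)$ makes the corner $(s,t)\sim(0,T)$ of $[0,T]^2$ degenerate in exactly the same way as the diagonal, so the Kac-Rice integrand becomes singular both at $u=0$ and at $u=T$ (where $1-\cos^{2d}(u/\sqrt d)$ vanishes). Extracting the contribution of these regions cleanly and showing that it produces exactly $+1$ is where I would spend the bulk of the effort.
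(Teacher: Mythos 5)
Your derivation of the main term follows essentially the paper's own route. The paper invokes the stationarity-reduced Kac--Rice formula for the second factorial moment directly from Cram\'er--Leadbetter \cite{cl} (their Eq.\ (10.7.5)), obtaining $\Esp(N_d(N_d-1))=\tfrac{2}{\pi^2}\int_0^{\sqrt d\pi}(\sqrt d\pi-t)\,g_d(t)[\cdots]\,dt$, and then performs exactly your reflection argument: $f(\sqrt d\pi-t)=f(t)$, deduced from $r_d(\sqrt d\pi-t)=(-1)^dr_d(t)$ together with the evenness of $\rho\mapsto\sqrt{1-\rho^2}+\rho\arctan\bigl(\rho/\sqrt{1-\rho^2}\bigr)$, converts $2\int_0^T(T-t)f(t)\,dt$ into $2T\int_0^{T/2}f(t)\,dt$. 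Your Schur-complement computation of $v_d,p_d,\rho_d$ and the identity $\Esp|UV|=\tfrac{2v}{\pi}(\sqrt{1-\rho^2}+\rho\arcsin\rho)$ reproduce $g_d=2\pi p_dv_d$ and the constant $\tfrac{2\sqrt d}{\pi}$ correctly, so up to this point the two arguments coincide (yours is merely more self-contained in deriving the one-dimensional form from the double integral).

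The gap is the additive $+1$ in \eqref{eq:Rice}. You defer it to a ``boundary/diagonal bookkeeping'' near $u=0$ and $u=T$ that you never carry out, and the mechanism you propose cannot produce it: the Kac--Rice double integral for the second \emph{factorial} moment is already exact, the diagonal and the corner $(s,t)\approx(0,\sqrt d\pi)$ are Lebesgue-null, and the integrand is not even singular there --- the blow-up $p_d(t)\sim c/t$ is compensated by $v_d(t)\to 0$, so that $g_d(t)\to 0$ as $t\to 0$ (cf.\ Lemma \ref{lemma:varZd}), and likewise at $t=\sqrt d\pi$ by the reflection symmetry. There is no hidden contribution of $+1$ to extract. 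Note that the paper's own proof of the lemma likewise terminates at $2\sqrt d\,\pi\int_0^{\sqrt d\pi/2}f(t)\,dt$ with no additive constant, and when the lemma is applied in Eq.\ \eqref{eq:Rice3} the $+1$ is dropped; in any case it is $o(\sqrt d)$ and immaterial for Proposition \ref{prop}. So the honest conclusion of your (and the paper's) computation is \eqref{eq:Rice} \emph{without} the $+1$; you should assert that, rather than promise a boundary analysis that would have to manufacture a term which is not there.
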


Now, we pass to the asymptotic variance of $N_d=N_{Z_d}([0,\sqrt{d}\pi])$. 
We need the following asymptotics and bounds.

\begin{lemma}\label{lemma:limcov}
For each fixed $t\in\R$, 
we have
\begin{equation*}
\cos^d\left(\frac{t}{\sqrt{d}}\right)\mathop{\longrightarrow}\limits_{d\to\infty}e^{-t^2/2}.
\end{equation*}
Besides, 
these convergences are uniform in compacts. 
Furthermore, 
for $0<a<1$ we have the 
following upper bounds
$$
\cos^d\left(\frac{t}{\sqrt{d}}\right)\leq
\begin{cases}
e^{-\alpha t^2/2};&\textrm{ if }0\leq t<a\sqrt{d},\\
\cos^d(a);&\textrm { if }a\sqrt{d}\leq t\leq \pi\sqrt{d}/2.
\end{cases}
$$ 
with $\alpha=1-a^2/3\in(2/3,1)$. 
\end{lemma}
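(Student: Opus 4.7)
The plan is to derive all three assertions from elementary Taylor expansion and monotonicity properties of $\cos$ near $0$. The starting observation is that for $|u|<\pi/2$ one has $\log\cos(u) = -u^2/2 + O(u^4)$; substituting $u = t/\sqrt{d}$ and multiplying by $d$ yields
\[
d\log\cos\!\left(\frac{t}{\sqrt{d}}\right) = -\frac{t^2}{2} + O\!\left(\frac{t^4}{d}\right),
\]
which tends to $-t^2/2$ pointwise in $t$, and uniformly for $t$ in any compact set $[-K,K]$ since the remainder is then bounded by $CK^4/d$. Exponentiating gives the stated convergence $\cos^d(t/\sqrt{d})\to e^{-t^2/2}$.

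For the upper bounds I would split $[0,\pi\sqrt{d}/2]$ at $a\sqrt{d}$. On the outer range $a\sqrt{d}\leq t\leq \pi\sqrt{d}/2$, the argument $t/\sqrt{d}$ lies in $[a,\pi/2]$ where $\cos$ is nonnegative and decreasing, so $\cos(t/\sqrt{d})\leq\cos(a)$ and hence $\cos^d(t/\sqrt{d})\leq\cos^d(a)$. On the inner range $0\leq t<a\sqrt{d}$, I would establish the pointwise inequality $\cos(u)\leq 1-\alpha u^2/2$ for $|u|\leq a$ with $\alpha = 1-a^2/3$. Taylor's theorem with Lagrange remainder gives $\cos(u)\leq 1-u^2/2+u^4/24\leq 1-u^2/2+u^4/6$; for $|u|\leq a$ we have $u^4/6\leq a^2 u^2/6$, so
\[
\cos(u)\leq 1-\frac{u^2}{2}+\frac{a^2 u^2}{6} = 1-\frac{\alpha u^2}{2}\leq e^{-\alpha u^2/2}.
\]
Raising to the $d$-th power with $u=t/\sqrt{d}$ then yields $\cos^d(t/\sqrt{d})\leq e^{-\alpha t^2/2}$, as desired.

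I do not expect any serious obstacle: the entire lemma reduces to careful bookkeeping with the Taylor expansion of $\cos$ around $0$ and the monotonicity of $\cos$ on $[0,\pi/2]$. The only point that requires a little care is the remainder estimation in order to land on the specific value $\alpha = 1-a^2/3\in(2/3,1)$; in fact the sharper inequality $\log\cos(u)\leq -u^2/2$, valid on $[0,\pi/2)$ because $\tan(u)\geq u$ there, would already yield $\cos^d(t/\sqrt{d})\leq e^{-t^2/2}$, so the stated form is comfortably within reach.
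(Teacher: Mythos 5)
Your proof is correct and follows essentially the same route as the paper: Taylor expansion of $\cos$ (equivalently $\log\cos$) at the origin for the pointwise and locally uniform convergence, the split of $[0,\pi\sqrt{d}/2]$ at $a\sqrt{d}$ with monotonicity of $\cos$ on the outer range, and control of the remainder by the factor $a^2/3$ to land on $\alpha=1-a^2/3$ on the inner range. Your inner-range bound via $\cos(u)\leq 1-\alpha u^2/2\leq e^{-\alpha u^2/2}$ is a slightly cleaner packaging of the paper's $(1+x/d)^d\leq e^x$ argument, and your closing observation that $\tan(u)\geq u$ already gives the sharper bound $e^{-t^2/2}$ is a valid simplification, though not needed for the stated form.
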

\begin{remark}
It is worth to say that this limit covariance defines a centered stationary Gaussian process 
$X$ on $[0,\infty)$. 
The asymptotic behaviour of the number of real roots of $X_d$ is intimately related to 
the asymptotic behaviour of the number of roots of $X$ in increasing intervals. 
Similar situations occur in \cite{al} (where this fact is indeed used explicitly to obtain the CLT) and \cite{adl}. 
The fact that the limit process $X$ has Gaussian covariance function, and thus Gaussian spectral density, is remarkable.

Nevertheless, we do not need this fact in the sequel.
\end{remark}

\begin{lemma}\label{lemma:varZd}
For fixed $t$, 
\begin{align*}
g_d(t)=2\pi\cdot v_d(t)\cdot p_d(t)&\mathop{\longrightarrow}\limits_{d\to\infty}
\frac{1-(1+t^2)e^{-t^2}}{(1-e^{-t^2})^{3/2}}=:g(t);\\
\rho_d(t)&\mathop{\longrightarrow}\limits_{d\to\infty} 
e^{-t^2/2}\frac{1-t^2-e^{-t^2}}{1-e^{-t^2}-t^2e^{-t^2}}=:\rho(t).
\end{align*}
Besides, 
$0\leq g(t)<1$, $|\rho(t)|\leq 1$ and 
$g(t)\to_{t\to0} 0$. 
Furthermore, 
there exists an integrable upper bound for the r.h.s. of Equation \ref{eq:Rice}.
\end{lemma}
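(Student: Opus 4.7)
The plan is to derive each part of the lemma from Lemma \ref{lemma:limcov} and the explicit formulas for $v_d$, $p_d$, $\rho_d$ given in Lemma \ref{lemma:Rice}.

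\textbf{Pointwise limits.} Starting from the explicit formulas, I would substitute the asymptotic $\cos^d(t/\sqrt{d})\to e^{-t^2/2}$ (Lemma \ref{lemma:limcov}) together with the elementary fact $d\sin^2(t/\sqrt{d})\to t^2$. Combining these yields $d\cos^{2d-2}(t/\sqrt d)\sin^2(t/\sqrt d)\to t^2 e^{-t^2}$, and plugging in gives
$$v_d(t) \to 1 - \frac{t^2 e^{-t^2}}{1 - e^{-t^2}}, \qquad p_d(t) \to \frac{1}{2\pi\sqrt{1-e^{-t^2}}}.$$
Forming the product $g_d = 2\pi v_d p_d$ and simplifying, and doing the analogous manipulation for $\rho_d$, produces the stated formulas for $g$ and $\rho$.

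\textbf{Qualitative bounds.} The bound $|\rho(t)|\leq 1$ is inherited from $|\rho_d(t)|\leq 1$ (a correlation coefficient) by passing to the limit. The bound $g(t)\geq 0$ comes from $g_d(t)=2\pi v_d(t)p_d(t)\geq 0$ via the conditional-variance and density interpretations. For $g(t)<1$, setting $u:=e^{-t^2}\in(0,1)$ and using $t^2=-\log u$, the inequality reduces to $1-u+u\log u<(1-u)^{3/2}$, which is a one-variable calculus check (both sides vanish at the endpoints and a sign analysis of the derivative settles the middle). The limit $g(t)\to 0$ as $t\to 0$ follows from the Taylor expansions $1-(1+t^2)e^{-t^2}=t^4/2+O(t^6)$ and $(1-e^{-t^2})^{3/2}=t^3+O(t^5)$, giving $g(t)=t/2+O(t^3)$.

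\textbf{Integrable upper bound.} This is the technical heart of the lemma: its purpose is to allow dominated convergence in the integral representation of Lemma \ref{lemma:Rice}. After cancelling the leading $\sqrt d$ against $\Esp N_d=\sqrt d$, the relevant integrand is $g_d(t)\psi(\rho_d(t))-1$, where $\psi(\rho):=\sqrt{1-\rho^2}+\rho\arctan(\rho/\sqrt{1-\rho^2})$ satisfies $1\leq\psi\leq\pi/2$ on $[-1,1]$. I would split the domain at $t=a\sqrt d$ for some fixed $a\in(0,1)$. On $[a\sqrt d,\sqrt d\pi/2]$, Lemma \ref{lemma:limcov} gives $\cos^d(t/\sqrt d)\leq\cos^d(a)$, forcing $\rho_d\to 0$ and $g_d-1\to 0$ exponentially fast in $d$, so this piece is negligible. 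On $[0,a\sqrt d]$ the bound $\cos^d(t/\sqrt d)\leq e^{-\alpha t^2/2}$ dominates the correction by a quantity of the form $C(1+t^2)e^{-\alpha t^2}$ for $t\geq 1$, and by the trivial estimate $1+(\pi/2)\,g_d(t)$ with $g_d(t)=O(t)$ uniform in $d$ for $t\in[0,1]$. Piecing these together yields a $d$-uniform majorant that is Lebesgue-integrable on $[0,\infty)$.

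The main obstacle will be establishing the uniform estimate $g_d(t)=O(t)$ near $t=0$: the joint density $p_d(t)$ blows up like $1/t$ while the conditional variance $v_d(t)$ vanishes like $t^2/2$, and showing that this cancellation is \emph{uniform in $d$} (not merely pointwise) requires a careful expansion of the trigonometric quantities in the explicit formulas of Lemma \ref{lemma:Rice}, matching enough orders in $t$ to extract the correct behaviour with $d$-independent remainders.
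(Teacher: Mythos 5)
Your proposal is correct and follows essentially the same route as the paper: pointwise limits obtained by substituting the asymptotics of Lemma \ref{lemma:limcov}, a split of the domain at $t=a\sqrt{d}$ using the bounds $e^{-\alpha t^2/2}$ and $\cos^d(a)$, and a uniform-in-$d$ estimate $g_d(t)=O(t)$ near $t=0$ via Taylor--Lagrange expansion of the trigonometric quantities with $d$-independent remainders --- which is exactly the computation the paper carries out and which you correctly single out as the technical core. One cosmetic slip: in your reduction of $g<1$ to $1-u+u\log u<(1-u)^{3/2}$, both sides tend to $1$ (not $0$) as $u\to 0^{+}$, though the inequality itself is fine.
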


\begin{proposition}[limit variance for $N_d$]\label{prop}
As $d\to\infty$ we have
\begin{equation*} 
\frac{\var(N_d)}{\sqrt{d}}\mathop{\to}\limits_{d\to\infty} \sigma^2
:=\frac{2}{\pi}\int^{\infty}_{0}\left(
g(t)\left[\sqrt{1-\rho^2(t)}+\rho(t)\arctan\left(\frac{\rho(t)}{\sqrt{1-\rho^2(t)}}\right)\right]-1\right)dt+1,
\end{equation*}
where 
$g$ and $\rho$ are defined in Lemma \ref{lemma:varZd}. 
Furthermore, $\sigma^2<\infty$.
\end{proposition}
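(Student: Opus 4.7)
The strategy is to express $\var(N_d)$ via Lemma \ref{lemma:Rice}, absorb the contribution of the mean $\Esp(N_d)=\sqrt d$ into the Rice integral, and then pass to the limit under the integral sign using Lemma \ref{lemma:varZd}.

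Writing $\var(N_d)=\Esp(N_d(N_d-1))+\Esp(N_d)-\Esp(N_d)^2$ and plugging Lemma \ref{lemma:Rice} in, I get
$$
\var(N_d)\;=\;\frac{2\sqrt d}{\pi}\int_0^{\sqrt d\pi/2} g_d(t)\,K_d(t)\,dt\;+\;1+\sqrt d-d,
$$
where I abbreviate $K_d(t):=\sqrt{1-\rho_d^2(t)}+\rho_d(t)\arctan\!\bigl(\rho_d(t)/\sqrt{1-\rho_d^2(t)}\bigr)$. The key algebraic step is the identity $d=\tfrac{2\sqrt d}{\pi}\int_0^{\sqrt d\pi/2}\sqrt d\,dt$, i.e.\ $\sqrt d=\tfrac{2}{\pi}\int_0^{\sqrt d\pi/2}dt$; after dividing by $\sqrt d$ this lets one rewrite the centered quantity as
$$
\frac{\var(N_d)}{\sqrt d}\;=\;\frac{2}{\pi}\int_0^{\sqrt d\pi/2}\bigl(g_d(t)K_d(t)-1\bigr)\,dt\;+\;\frac{1}{\sqrt d}+1.
$$
This rewriting is crucial because both $g_d(t)$ and $K_d(t)$ tend to $1$ as $t$ grows (since $\rho_d(t)\to 0$ and $g_d(t)\to 1$ in that regime), so $g_dK_d$ on its own is not integrable on $[0,\infty)$, whereas the centered integrand $g_dK_d-1$ will be.

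From here I would apply dominated convergence, with the integrand extended by $0$ past $\sqrt d\pi/2$. Pointwise convergence $g_d(t)K_d(t)\to g(t)K(t)$ on $[0,\infty)$ follows immediately from the convergences $g_d\to g$ and $\rho_d\to\rho$ in Lemma \ref{lemma:varZd} and continuity of the map $\rho\mapsto \sqrt{1-\rho^2}+\rho\arctan(\rho/\sqrt{1-\rho^2})$. The integrable dominator for $|g_dK_d-1|$ is precisely the one the last clause of Lemma \ref{lemma:varZd} guarantees. Letting $d\to\infty$ yields the announced formula for $\sigma^2$ with $K(t):=\sqrt{1-\rho^2(t)}+\rho(t)\arctan(\rho(t)/\sqrt{1-\rho^2(t)})$, and the existence of the integrable dominator simultaneously gives $\sigma^2<\infty$.

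The only genuine difficulty is packaged inside Lemma \ref{lemma:varZd}, namely producing an integrable upper bound uniform in $d$. Two regimes must be handled separately. Near $t=0$, the factor $(1-\cos^{2d}(t/\sqrt d))^{-1/2}$ in $p_d$ diverges, but the corresponding factor in $g_d$ vanishes fast enough (as reflected in $g(t)\to 0$ when $t\to 0$) to keep the integrand bounded. For large $t$, one splits the range at $a\sqrt d$ using Lemma \ref{lemma:limcov}: on $t\le a\sqrt d$ the Gaussian-type bound $\cos^d(t/\sqrt d)\le e^{-\alpha t^2/2}$ forces rapid decay of $\rho_d$ and of $g_d-1$, while on $a\sqrt d\le t\le \sqrt d\pi/2$ the uniform estimate $\cos^d(a)$ is exponentially small in $d$. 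Together these give the needed uniformly integrable dominator, and the proposition follows.
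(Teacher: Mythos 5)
Your proposal is correct and follows essentially the same route as the paper: the same decomposition $\var(N_d)=\Esp(N_d(N_d-1))+\Esp(N_d)-(\Esp N_d)^2$, the same key identity $\sqrt d=\tfrac{2}{\pi}\int_0^{\sqrt d\pi/2}dt$ used to center the integrand, and dominated convergence based on the two-regime bounds (near $0$, $t\le a\sqrt d$, and $a\sqrt d\le t\le\sqrt d\pi/2$) supplied by Lemma~\ref{lemma:limcov} and Lemma~\ref{lemma:varZd}. The only (harmless) cosmetic difference is that the paper treats the $\arctan$ term and the $\sqrt{1-\rho_d^2}$ term separately---only the latter requires the subtraction of $1$, since $\rho_d$ already decays---whereas you center the whole product $g_d(t)K_d(t)$ at once.
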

\begin{remark}
Using Mehler formula we can write also
\begin{equation*}
\sigma^2=\int^{\infty}_{0}
\sum^\infty_{\ell=0}\frac{a^2_{2\ell}}{(2\ell)!}\rho^{2\ell}(t)
(g(t)-\delta_{0\ell})dt,
\end{equation*}
being $a_{2\ell}=2(-1)^{\ell+1}/(\sqrt{2\pi}2^\ell\ell!(2\ell-1))$ 
and $\delta$ Kronecker's delta.
\end{remark}
\begin{proof}
Recall that 
$$\var(N_d)=\Esp(N_d(N_d-1))-(\Esp(N))^2+\Esp(N).$$ 
From Lemma \ref{lemma:Rice} the normalized second factorial moment is
\begin{equation}\label{eq:Rice3}
\frac{\Esp(N_d(N_d-1))}{\sqrt{d}}=
\frac{2}{\pi}\int^{\sqrt{d}\pi/2}_{0}
g_d(t)\sqrt{1-\rho^2_d(t)}dt
+\frac{2}{\pi}\int^{\sqrt{d}\pi/2}_{0}
g_d(t)\rho_d(t)\arctan\left(\frac{\rho_d(t)}{\sqrt{1-\rho^2_d(t)}}\right)dt.
\end{equation}

Let us look at the second integral in the r.h.s. of Equation \eqref{eq:Rice3}. 
Let $a\sqrt{d}\leq t\leq \sqrt{d}\pi/2$.  
We go back in our scaling: $s\mapsto t/\sqrt{d}$; 
so $s\in[a,\pi/2]$. 
By the proof of Lemma \ref{lemma:varZd} we have 
$|\rho_d(t)|\leq\cos^{d-2}(a)$ 
(see Equation \eqref{eq:cotacos} below for the details). 
Also by Lemma \ref{lemma:varZd} $g_d(t)$ is bounded by constant. 
Hence,
\begin{equation*}
\frac{2}{\pi}\int^{\sqrt{d}\pi/2}_{a\sqrt{\pi}}g_d(t)
|\rho_d(t)|\arctan\left(\frac{\rho_d(t)}{\sqrt{1-\rho^2_d(t)}}\right)dt
\leq\int^{\sqrt{d}\pi/2}_{a\sqrt{d}}\cos^{d-2}(a)dt\to_{d\to\infty}0.
\end{equation*}
Let $0\leq t<a\sqrt{d}$. 
Lemma \ref{lemma:varZd} gives the point-wise limit and the domination in order to obtain
\begin{equation*}
\frac{2}{\pi}\int^{a\sqrt{\pi}}_0g_d(t)
\rho_d(t)\arctan\left(\frac{\rho_d(t)}{\sqrt{1-\rho^2_d(t)}}\right)dt
\to_d
\frac{2}{\pi}\int^{\infty}_{0}g(t)
\rho(t)\arctan\left(\frac{\rho(t)}{\sqrt{1-\rho^2(t)}}\right)dt
\end{equation*}
In particular, this integral is finite.

The first integral in the r.h.s. of Equation \eqref{eq:Rice3} 
cancel at infinity with $(\Esp(N_d))^2$. 
In fact, we can write
\begin{equation*}
(\Esp N_d)^2=d=\frac{2}{\pi}\sqrt{d}\int^{\sqrt{d}\pi/2}_0dt.
\end{equation*}
Hence, the first integral in the r.h.s. of Equation \eqref{eq:Rice3} 
minus $(\Esp(N_d))^2$ gives
\begin{equation}\label{eq:conjugado}
\frac{2}{\pi}\int^{\pi\sqrt{d}/2}_{0}
\left[g_d(t)\sqrt{1-\rho^2_d(t)}-1\right]dt
=\frac{2}{\pi}\int^{\pi\sqrt{d}/2}_{0}\left[
\frac{v_d(t)\sqrt{1-\rho^2_d(t)}}{\sqrt{1-r^2_d(t)}}-1\right]dt.
\end{equation}
From Lemma \ref{lemma:varZd} it follows that the integrand 
of the l.h.s of Equation \eqref{eq:conjugado} tends to 
$g(s)-1$. 

In order to obtain a domination, 
by standard manipulation, it follows that the important point 
is to bound the difference 
$v^2_d(t)(1-\rho^2_d(t))-1+r^2_d(t)=(v^2_d(t)-1+r^2_d(t))-(v^2_d(t)\rho^2_d(t))$ 
in the numerator. 
The second term is easily bounded. 
For the first one, 
we have
\begin{equation*}
v^2_d(t)-1+r^2_d(t)
=\left[1-\frac{d\cos^{2d-2}\left(\frac{t}{\sqrt{d}}\right)\sin^2\left(\frac{t}{\sqrt{d}}\right)}
{1-\cos^{2d}\left(\frac{t}{\sqrt{d}}\right)}\right]^2
-1+\cos^{2d}\left(\frac{t}{\sqrt{d}}\right)
\end{equation*}
After expanding the squares and cancelling the ones, 
we divide again into the cases $0\leq t<a\sqrt{d}$ and $a\sqrt{d}\leq t\leq \sqrt{d}\pi/2$. 
In the latter, the bound $\cos(t/\sqrt{d})<\cos(a)$ suffices to obtain that  
the integral  tends to $0$. 
In the former, an uniform integrable upper bound for this difference 
follows easily by the triangle inequality  
and using the bounds $d^{j}\sin^{2j}(t/\sqrt{d})\leq t^{2j}$; $\cos(t/\sqrt{d})<e^{-\alpha t^2/2}$ and $\cos^{-1}(t/\sqrt{d})<\cos^{-1}(a)$. 

The denominator is handled using the bound in Lemma \ref{lemma:limcov}. 
This gives the domination, so we can pass to the limit inside the integral. 
Hence, the first integral in the r.h.s. of Equation \eqref{eq:Rice3} 
minus $(\Esp(N_d))^2$ tend, as $d\to\infty$, to
\begin{equation*}
\frac{2}{\pi}\int^{\infty}_{0}(g(t)-1)dt.
\end{equation*}
In particular, this integral is finite, thus, so is $\sigma^2$.

Finally, 
let us say that the convergence at $0$ of the integral in Equation \eqref{eq:conjugado} 
follows from the proof of Lemma \ref{lemma:varZd}. 
The result follows.
\end{proof}

\section{CLT}
\begin{proposition}\label{prop2}
The normalised number of zeros of KSS polynomials 
converge in distribution towards a centered Gaussian random variable with variance $\sigma^2$.
\end{proposition}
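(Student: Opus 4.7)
The plan is to apply the Kratz-Le\'on method \cite{kl97,kl01} together with the Fourth Moment Theorem of Peccati and Tudor \cite{pt}, following the scheme developed by Aza\"{\i}s, Dalmao and Le\'on \cite{adl} in the trigonometric setting. The starting point is the Kac-Rice regularisation
\begin{equation*}
N_d^{\varepsilon}:=\frac{1}{2\varepsilon}\int_0^{\sqrt{d}\pi}\indicator_{|Z_d(t)|<\varepsilon}|Z'_d(t)|\,dt,
\end{equation*}
which converges in $L^2(\Prob)$ to $N_d$ as $\varepsilon\to 0$. For each fixed $t$ the pair $(Z_d(t),Z'_d(t)/\sqrt{d})$ is standard Gaussian with independent components; expanding the integrand on the bivariate Hermite basis and letting $\varepsilon\to 0$ coefficient by coefficient yields the chaotic decomposition
\begin{equation*}
N_d-\sqrt{d}=\sum_{q\geq 1}I_q(f_{q,d}),
\end{equation*}
where $I_q$ denotes the multiple Wiener-Ito integral of order $q$ and the kernels $f_{q,d}$ are explicit in terms of the Hermite coefficients of $|\cdot|$ and $\indicator_{|\cdot|<\varepsilon}$ and the covariance structure of $(Z_d,Z'_d)$. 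The evenness of both functions in Hermite expansion forces only even-order chaoses to contribute.

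Next, for each fixed even $q\geq 2$, I would identify the limit variance
\begin{equation*}
\frac{\var(I_q(f_{q,d}))}{\sqrt{d}}\mathop{\longrightarrow}\limits_{d\to\infty}\sigma_q^2,
\end{equation*}
where $\sigma_q^2$ is precisely the $q$-th summand in the Mehler-type representation of $\sigma^2$ displayed in the remark after Proposition \ref{prop}. The computation uses stationarity of $Z_d$, Mehler's formula, the explicit Hermite coefficients $a_{2\ell}$ given there, and Lemma \ref{lemma:limcov} combined with a dominated convergence argument mimicking the one used in the proof of Proposition \ref{prop}. Summing yields $\sum_{q\geq 2}\sigma_q^2=\sigma^2$.

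Third, for each fixed even $q$, the Fourth Moment Theorem reduces the CLT for $I_q(f_{q,d})/d^{1/4}$ to the smallness of the non-trivial contractions,
\begin{equation*}
\|f_{q,d}\otimes_r f_{q,d}\|=o\bigl(\|f_{q,d}\|^{2}\bigr),\quad 1\leq r\leq q-1.
\end{equation*}
These contractions reduce, through a diagram computation, to multiple integrals over $[0,\sqrt{d}\pi]^k$ of products of $r_d$, $r'_d/\sqrt{d}$ and $r''_d/d$ evaluated at time differences $t_i-t_j$. The Gaussian-type bound $\cos^d(t/\sqrt{d})\leq e^{-\alpha t^2/2}$ on the bulk $[0,a\sqrt{d}]$ and the exponential bound $\cos^d(a)$ on $[a\sqrt{d},\sqrt{d}\pi/2]$ supplied by Lemma \ref{lemma:limcov} dominate the $\sqrt{d}$-growth of the domain and yield the required $o(\sqrt{d})$ estimates. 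Peccati-Tudor's vector version then produces the joint CLT for any finite collection $(I_2(f_{2,d}),\dots,I_{2L}(f_{2L,d}))/d^{1/4}$.

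Finally, one passes from finite chaos to the full sum by truncation: given $\eta>0$, choose $L$ so that $\sum_{q>2L}\sigma_q^2<\eta$ and control the $L^2$ tail $\sum_{q>2L}\var(I_q(f_{q,d}))/\sqrt{d}$ uniformly in $d$ using the integrable domination from Lemma \ref{lemma:varZd}; Slutsky's lemma then delivers the result. The main obstacle will be the joint quantitative control of the contractions and of the chaos-tail variance, since both involve integrals over the growing interval $[0,\sqrt{d}\pi]$ in which factors of $\sqrt{d}$ and $d$ arising from the derivatives of $r_d$ must be absorbed by the Gaussian decay. Justifying rigorously the commutation of $\varepsilon\to 0$ with the chaos projection, already delicate in \cite{kl97}, is a secondary technical point.
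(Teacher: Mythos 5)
Your proposal follows essentially the same route as the paper: the Kratz--Le\'on chaotic expansion of $N_d$, the Fourth Moment Theorem applied chaos by chaos via the vanishing of the contractions (controlled by the Gaussian-type and exponential bounds on $r_d$ and its derivatives from Lemma \ref{lemma:limcov}), and the truncation of the tail of the expansion justified by the convergence of the variance. The steps you flag as delicate (absorbing the powers of $d$ from the derivatives of $r_d$ into the decay of $\cos^d(\cdot/\sqrt{d})$ over the growing interval) are exactly the ones the paper carries out.
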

The idea of the proof is the following: 
to write the normalised number of zeros as a chaotic series \cite{kl97} and then 
to use the Fourth Moment Theorem \cite{pt} combined with Kratz-Le\'on method \cite{kl01} in order to obtain the asymptotic normality. 

More precisely, 
we take the It\^{o}-Wiener expansion of the normalised number of zeros \cite{kl97}. 
Then, 
by Kratz-Le\'{o}n method \cite{kl01}, 
the finiteness of the variance of $N_d$ allows to truncate the expansion  
and to derive its asymptotic normality from that of the sum of the first, say $Q$, terms. 
Finally, 
the Fourth Moment Theorem \cite{pt} gives a criterion to prove the asymptotic normality 
of the finite partial sums of the expansion. 

\begin{proof}
We apply Kratz-Le\'on expansion \cite{kl97} to the processes $Z_d$ on the interval $[0,\sqrt{d}\pi]$.  
Hence
\begin{equation*}
\frac{N_d-\Esp(N_d)}{d^{1/4}}=
\sum^{\infty}_{q=2}I_{q,d},
\end{equation*}
where 
\begin{equation}\label{eq:IW}
I_{q,d}=\frac{1}{d^{1/4}}\int^{\sqrt{d}\pi}_{0}f_q(Z_d(t),Z^\prime_d(t))dt,\quad
f_q(x,y)=\sum^{[q/2]}_{\ell=0}b_{q-2\ell} a_{2\ell}H_{q-2\ell}(x)H_{2\ell}(y),
\end{equation}
with $a_{2\ell}=2(-1)^{\ell+1}/(\sqrt{2\pi}2^\ell\ell!(2\ell-1))$, $b_k=\frac{1}{k!}\varphi(0)H_k(0)$. 
Note that we can delete the term corresponding to $q=1$ since $H_1(0)=0$; 
this is why we restrict our attention to zeros. 

We can express $I_{q,d}$ as multiple stochastic integrals w.r.t $B$. 
In the first place, using a standard B.m. $B$ we can write $Z_d(t)=\int_\R h_d(t,\lambda)dB(\lambda)$ with
\begin{equation}\label{eq:hd}
h_d(t,\lambda)=\sum^d_{n=0}\sqrt{\binom{d}{n}}\cos^n\left(\frac{t}{\sqrt{d}}\right)
\sin^{d-n}\left(\frac{t}{\sqrt{d}}\right)\indicator_{[n,n+1]}(\lambda).
\end{equation}
Then, from Equation \eqref{eq:hd}, 
using the properties of the chaos and the stochastic Fubini theorem, see \cite[Remark 2]{adl}, we have  
$I_q=I^B_q(g_q({\boldsymbol\lambda}_q))$ with
\begin{equation*}
g_q({\boldsymbol\lambda}_q)
=\frac{1}{d^{1/4}}\int^{\sqrt{d}\pi}_{0}\sum^{\left\lfloor q/2\right\rfloor}_{j=0}
b_{q-2j}a_{2j}(h^{\otimes q-2j}_d(s,{\boldsymbol\lambda}_{q-2j})\otimes h_d^{\prime\,
\otimes 2j}(s,{\boldsymbol\lambda}_{2j}))ds;
\end{equation*}
where ${\boldsymbol\lambda}_k\in\R^k$ and $\otimes$ stands for tensorial product.

Now, to get the asymptotic normality of the standardised zeros, 
by Kratz-Le\'on method and the Fourth Moment Theorem, 
it suffices to prove that the contractions $g_q\otimes_k g_q({\boldsymbol\lambda}_{2q-2k})$ tend to $0$ 
in $L^2$ as $d\to\infty$ for $q\geq 2$ and $k=1,\cdots,q-1$ and $\lambda_{2q-2k}\in\R^{2q-2k}$. 

Let ${\boldsymbol z}_k=(z_1,\dots,z_k)$ and 
${\boldsymbol\lambda}_{2q-2k}={\boldsymbol\lambda}_{q-k}\otimes{\boldsymbol\lambda}^\prime_{q-k}$. 
The contractions are defined \cite{pt} 
as
\begin{equation*}
g_{q}\otimes_k g_{q}({\boldsymbol\lambda}_{2q-2k})
=\int_{\R^k}g_{q}({\boldsymbol z}_k,{\boldsymbol\lambda}_{q-k})
g_{q}({\boldsymbol z}_k,{\boldsymbol\lambda}^\prime_{q-k})d{\boldsymbol z}_k.
\end{equation*}

Actually, 
by the properties of stochastic integrals, we have 
$I^B_q(g_q({\boldsymbol\lambda}_q))=I^B_q(Sym(g_q({\boldsymbol\lambda}_q)))$ 
being 
\begin{equation*}
Sym(g_q)({\boldsymbol\lambda}_q)=
\frac{1}{q!}\sum_{\sigma\in S_q}g_q({\boldsymbol\lambda_{\sigma}}),
\end{equation*}
being $S_q$ the group of permutations of $\{1,\dots,q\}$ and 
${\boldsymbol\lambda}_{\sigma}=(\lambda_{\sigma(1)},\dots,\lambda_{\sigma(q)})$. 
So we compute contractions for $Sym(g_q)({\boldsymbol\lambda_{q}})$ instead of 
$g_q({\boldsymbol\lambda_{q}})$. 

Writing down the norm of the contractions is quite tedious, but the basic 
fact is that the isometric property of stochastic integrals implies that 
$h_d^{\otimes p}(s)\otimes_kh_d^{\otimes p}(t)=r^k_d(t-s)h_d^{\otimes p-k}(s)\otimes h_d^{\otimes p-k}(t)$. Similarly, when the identified variable in the contraction  
involves the derivatives of $h$ the result involves the derivatives of $r_d$. 
Taking this into account, it follows that
\begin{multline*}
\|Sym(g_{q})\otimes_k Sym(g_{q})({\boldsymbol\lambda}_{2q-2k})\|^2_2= 
\frac{1}{d}
\iiiint_{[0,\sqrt{d}\pi]^4}
\sum_{0\leq{\boldsymbol j}\leq[q/2]}c_{\boldsymbol j}\frac{1}{q!}\sum_{\sigma\in S_q}\\
\prod^2_{i=0}(r^{(i)}_d(t-s))^{\alpha_i} (r^{(i)}_d(t^\prime-s^\prime))^{\beta_i}
(r^{(i)}_d(s-s^\prime))^{\gamma_i}
(r^{(i)}_d(t-t^\prime))^{\delta_i}\;ds\;dt\;ds^\prime\;dt^\prime;
\end{multline*}
where ${\boldsymbol j}=(j_1,j_2,j_3,j_4)$, vector inequalities are understood component-wise; 
$c_{\boldsymbol j}=\prod^4_{i=1}a_{2j_i}\,b_{q-2j_i}$;  
$\alpha_i=\alpha_i(\sigma,{\boldsymbol j})$, $\beta_i=\beta_i(\sigma,{\boldsymbol j})$, 
$\gamma_i=\gamma_i(\sigma,{\boldsymbol j})$ and $\delta_i=\delta_i(\sigma,{\boldsymbol j})$; 
$\sum^4_{i=1}\alpha_i=\sum^4_{i=1}\beta_i=k$ and 
$\sum^4_{i=1}\gamma_i=\sum^4_{i=1}\delta_i=q-k$. 
Actually, there are some constrains for $\alpha,\beta,\gamma,\delta$ with respect to 
${\boldsymbol j}$, (namely $\alpha_1\leq (q-2j_1)\wedge(q-2j_2),$ 
$\alpha_2\leq(q-2j_1)\wedge 2j_2+(q-2j_2)\wedge 2j_1$, etc), 
but they are irrelevant for our purposes.

We bound the covariances by their absolute value.  
Since $\var(Z_d(t))=\var(Z^\prime_d(t))=1$, by Cauchy-Schwarz, 
each factor $|r^{(i)}_d(\cdot)|\leq 1$. 
Furthermore, 
since $k\geq 1$ and $q-k\geq 1$, we can bound from above the product of each group of  factors (i.e.: with the same argument) by one of them.

Hence, for some $i_1,i_2,i_3,i_4\in\{0,1,2\}$ we have
\begin{multline*}
\|Sym(g_{q})\otimes_k Sym(g_{q})({\boldsymbol\lambda}_d)\|^2_2\leq\\
\frac{C}{d}
\iiiint_{[0,\sqrt{d}\pi]^4}
|r^{(i_1)}_d(t-s)\,r^{(i_2)}_d(t^\prime-s^\prime)\,r^{(i_3)}_d(s-s^\prime)
\,r^{(i_4)}_d(t-t^\prime)|\;ds\;dt\;ds^\prime\;dt^\prime,
\end{multline*}
where $C$ is a meaningless constant. 
Now, we make the change of variables: $(x,y,u,t^\prime)\mapsto(t-s,t^\prime-s^\prime,s-s^\prime,t^\prime)$ and enlarge the domain of integration in order to have a rectangular one. 
Thus
\begin{equation*}
\|Sym(g_{q})\otimes_k Sym(g_{q})({\boldsymbol\lambda}_d)\|^2_2\leq
\frac{C}{d}
\int^{\sqrt{d}\pi}_{0}dt^\prime
\int^{\sqrt{d}\pi}_{-\sqrt{d}\pi}\;|r^{(i_1)}_d(x)|dx
\int^{\sqrt{d}\pi}_{-\sqrt{d}\pi}\;|r^{(i_2)}_d(y)|dy
\int^{\sqrt{d}\pi}_{-\sqrt{d}\pi}
|r^{(i_3)}_d(u)|\;du.
\end{equation*}
Let us look at the three inner integrals. 
Note that since $r_d$ is even, so is the absolute value of its derivatives, so it suffices to integrate on $[0,\sqrt{d}\pi]$. 
Besides, 
since for $t\in[0,\sqrt{d}\pi/2]$ we have $r_d(\sqrt{d}\pi-x)=(-1)^d r_d(x)$, 
it follows that $|r^{(i)}_d(\sqrt{d}\pi-x)|=|r^{(i)}_d(x)|,i\in\{0,1,2\}$. 
Then, we can further restrict the domain of integration to $[0,\sqrt{d}\pi/2]$. 
The finiteness of the integral then follows from Equation \eqref{eq:ryderivadas}, by bounding the covariance 
by a polynomial (of degree at most $2$) times $\cos^d(\cdot/\sqrt{d})$ and then using Lemma \ref{lemma:limcov}. 
Hence, the contractions tend to $0$. 

The result follows.
\end{proof}
\begin{corollary}\label{coro}
The asymptotic variance $\sigma^2$ is strictly positive. 
\end{corollary}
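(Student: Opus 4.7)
The plan is to bound $\sigma^2$ from below by the contribution of a single Wiener chaos, namely the second one. By orthogonality of chaoses of different orders in the expansion of Proposition~\ref{prop2},
$$
\var\!\left(\frac{N_d-\sqrt d}{d^{1/4}}\right)=\sum_{q\ge 2}\Esp(I_{q,d}^2)\ge \Esp(I_{2,d}^2),
$$
and taking $\liminf$ in $d$ together with Proposition~\ref{prop} yields $\sigma^2\ge \liminf_{d\to\infty}\Esp(I_{2,d}^2)$. It therefore suffices to exhibit a strictly positive limit of $\Esp(I_{2,d}^2)$.

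First I would simplify $f_2$. Substituting the explicit values $a_0=\sqrt{2/\pi}$, $a_2=1/\sqrt{2\pi}$, $b_0=1/\sqrt{2\pi}$ and $b_2=-1/(2\sqrt{2\pi})$ in Equation~\eqref{eq:IW} gives $f_2(x,y)=\tfrac{1}{2\pi}(H_2(y)-H_2(x))$, so $I_{2,d}=(2\pi d^{1/4})^{-1}\!\int_0^{\sqrt d\pi}(Z_d'(t)^2-Z_d(t)^2)\,dt$. Expanding $\Esp(I_{2,d}^2)$, applying the Gaussian identity $\Esp[(X^2-1)(Y^2-1)]=2\,\cov(X,Y)^2$ to each of the four cross terms, and using the covariances of $Z_d,Z_d'$ recorded in Section~1, the double integral collapses via $\int_0^T\!\int_0^T F(t-s)\,ds\,dt=2\int_0^T(T-u)F(u)\,du$ (valid for $F$ even), giving
$$
\Esp(I_{2,d}^2)=\frac{1}{\pi}\int_0^{\sqrt d\pi}\!\left(1-\frac{u}{\sqrt d\,\pi}\right)\!\bigl[r_d''(u)^2-2\,r_d'(u)^2+r_d(u)^2\bigr]\,du.
$$

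Next I would pass to the limit inside this integral. Pointwise convergence of $r_d^{(i)}$ to $r^{(i)}$ with $r(u)=e^{-u^2/2}$ is immediate from Equation~\eqref{eq:ryderivadas} and Lemma~\ref{lemma:limcov}. For domination I would reuse the split-domain trick of Proposition~\ref{prop}: on $[a\sqrt d,\sqrt d\pi/2]$ bound each $|r_d^{(i)}(u)|$ by a polynomial in $u$ times $\cos^{d-2}(a)$, whose integral vanishes as $d\to\infty$; on $[0,a\sqrt d]$ combine $\cos^d(u/\sqrt d)\le e^{-\alpha u^2/2}$ with $d^j\sin^{2j}(u/\sqrt d)\le u^{2j}$ to produce a Gaussian--polynomial majorant. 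Dominated convergence then yields
$$
\lim_{d\to\infty}\Esp(I_{2,d}^2)=\frac{1}{\pi}\int_0^\infty e^{-u^2}\bigl(u^4-4u^2+2\bigr)\,du=\frac{3}{8\sqrt\pi}>0,
$$
from the classical identities $\int_0^\infty u^{2n}e^{-u^2}du=\tfrac{(2n)!}{4^n n!}\tfrac{\sqrt\pi}{2}$. Together with the first paragraph this gives $\sigma^2\ge 3/(8\sqrt\pi)>0$, proving the corollary. The only genuinely non-routine step is the domination, since the integrand $r_d''(u)^2-2r_d'(u)^2+r_d(u)^2$ is not of constant sign near the origin; however, bounding the three non-negative pieces $r_d^{(i)}(u)^2$ separately and reusing the $[0,a\sqrt d]\cup[a\sqrt d,\sqrt d\pi/2]$ splitting of Proposition~\ref{prop} transfers that argument essentially verbatim.
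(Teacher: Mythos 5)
Your proof is correct and follows essentially the same route as the paper: both lower-bound $\sigma^2$ by the variance of the second chaos, using orthogonality of the Wiener chaoses and the convergence of $\var(N_d)/\sqrt d$ from Proposition \ref{prop}. The only difference is that the paper delegates the computation of $\lim_d\var(I_{2,d})$ to \cite[Eq.\ 10.42--10.43]{aw}, whereas you carry it out explicitly (and your value $3/(8\sqrt\pi)$ and the domination argument check out).
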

\begin{proof}
From the It\^{o}-Wiener expansion \eqref{eq:IW} it follows that 
$\sigma^2=\sum^\infty_{q=2}\var(I_{q})$ with $\var(I_q)=\lim_d\var(I_{q,d})$. 
Thus, it suffices to prove that $\var(I_2)>0$. 
This is done exactly as in \cite[Eq 10.42-10.43]{aw}. 
\end{proof}

\begin{proof}[Proof of Theorem \ref{teo}]
Put together Propositions \ref{prop} and \ref{prop2} and Corollary \ref{coro}. 
The approximated value for $\sigma^2$ is obtained numerically from the formula 
in Proposition \ref{prop}. 
\end{proof}
It worth to say that this value is confirmed by simulations.

\section{Proofs of the lemmas}
\begin{proof}[Proof of Lemma \ref{lemma:Rice}]
We compute the second factorial moment via Rice formula. 
By Equation (10.7.5) of \cite{cl} we have 
\begin{equation*}
\Esp(N_d(N_d-1))=\frac{2}{\pi^2}
\int^{\sqrt{d}\pi}_{0}(\sqrt{d}\pi-t)g_d(t)\left(\sqrt{1-\rho^2_d(t)}+\rho_d(t)
\arctan\left(\frac{\rho_d(t)}{\sqrt{1-\rho^2(t)}}\right)\right)dt
\end{equation*}
Denote 
$$f(t)=g_d(t)\left(\sqrt{1-\rho^2_d(t)}+\rho_d(t)
\arctan\left(\frac{\rho_d(t)}{\sqrt{1-\rho^2(t)}}\right)\right).$$ 
Then, $f(\sqrt{d}\pi-t)=f(t)$. 
This follows from the properties 
$\cos(t)=\cos(-t)=-\cos(\pi-t)$ and $\sin(t)=-\sin(-t)=\sin(\pi-t)$ 
and from the fact that we only use even powers of the cosines and sines in $g_d$ and $\rho^2_d$ 
and that the signs also cancel in the product 
$\rho\arctan(\rho/\sqrt{1-\rho^2})$.

Then, using the change of variables $x=\sqrt{d}\pi-t$ 
in the interval $[\sqrt{d}\pi/2,\sqrt{d}\pi]$, we have
\begin{multline*}
\Esp(N_d(N_d-1))=
2\int^{\sqrt{d}\pi}_0f(t)(\sqrt{d}\pi-t)dt\\
=2\int^{\sqrt{d}\pi/2}_0f(t)(\sqrt{d}\pi-t)dt+2\int^{\sqrt{d}\pi}_{\sqrt{d}\pi/2}f(t)(\sqrt{d}\pi-t)dt\\
=2\int^{\sqrt{d}\pi/2}_0f(t)(\sqrt{d}\pi-t)dt
+2\int^{0}_{\sqrt{d}\pi/2}f(\sqrt{d}\pi-x)(\sqrt{d}\pi-[\sqrt{d}\pi-x])(-dx)\\
=2\int^{\sqrt{d}\pi/2}_0f(t)(\sqrt{d}\pi-t)dt+2\int^{\sqrt{d}\pi/2}_0f(x)x dx
=2\int^{\sqrt{d}\pi/2}_0f(t)\sqrt{d}\pi dt
\end{multline*}
\end{proof}

\begin{proof}[Proof of Lemma \ref{lemma:limcov}]

We start with the cosine. 

Assume that $0\leq t<a\sqrt{d}$. 
Using Taylor-Lagrange expansion up to the first order 
for the cosine, we can write
$$
\cos^d\left(\frac{t}{\sqrt{d}}\right)=\left(1+\frac{x}{d}\right)^d;
\textrm{ with }x=-\frac{t^2}{2}\left(1-\frac{\sin(t^*)t}{3\sqrt{d}}\right),
$$
where $t^*\in[0,t/\sqrt{d}]\subset[0,a]$. 
 
There exists $c(a)$ such that $|x/d|<c(a)<1$. 
({\footnotesize In fact, if $t<a\sqrt{d}$, then $x/d=-a^2/2+\sin(t^*)a^3/6$.  
Since the summands have different signs, it follows that $-1<-a^2/2\leq x/d\leq a^4/6<1$. 
The claim follows with $c(a)=\max\{a^2/2,a^4/6\}=a^2/2$. })

Again by Taylor-Lagrange expansion, for the logarithm this time, it follows that
$$
\log\left(\cos^d\left(\frac{t}{\sqrt{d}}\right)\right)
=\log\left(1+\frac{x}{d}\right)^d=x+\log^{\prime\prime}(1+x^{**})\frac{x^2}{2d};
$$
with $x^{**}\in[0,x/d]\subset[0,a]$. 
Hence, 
$$
\cos^d\left(\frac{t}{\sqrt{d}}\right)=\left(1+\frac{x}{d}\right)^d=e^xe^{\log^{\prime\prime}(1+x^{**})\frac{x^2}{2d}}.
$$
Note that $\log^{\prime\prime}(1+x^{**})=-(1+x^{**})^{-2}\in(-(1-a)^{-2},-(1+a)^{-2}]$; 
so $\log^{\prime\prime}(1+x^{**})<0$. \\

Now, for fixed $t$ is easy to see that when $d\to\infty$ then $|t|<a\sqrt{d}$; 
$x\to-t^2/2$ and $\log^{\prime\prime}(1+x^{**})x^2/2d\to 0$. 
Thus, $\cos^d\left(\frac{t}{\sqrt{d}}\right)\to e^{-t^2/2}$ as $d\to\infty$ as claimed. 

It is also easy to check the uniformity of the convergence in compacts $[0,\beta]$ from the preceding expression. 
In fact, let $t\in[0,\beta]$, then
\begin{multline*}
\left|\cos^d\left(\frac{t}{\sqrt{d}}\right)-e^{-t^2/2}\right|
=e^{-t^2/2}\left|e^{\sin(t^*)t^3/6d^{3/2}+\log^{\prime\prime}(1+x^{**}(t))x^2(t)/2d}-1\right|\\
\leq e^{-t^2/2}\max\{\left|e^{\sin(\beta^*)\beta^3/6d^{3/2}}-1\right|,
\left|e^{-(1-\beta^2/2d)^{-2}\beta^4/4d}-1\right|\}\to_d 0;
\end{multline*}
where we use the fact that the summands in the exponent have different signs.\\

Now, we turn to the bounds. 
Since $\log^{\prime\prime}(1+x^{**})<0$ we see that $\cos^d(t/\sqrt{d})\leq e^x$. 
Furthermore, 
since $sin(\cdot)\leq\cdot$, we have $0\leq\sin(t^*)t/3\sqrt{d}\leq a^2/3$. 
Hence,
\begin{equation}\label{eq:rdominacion}
\cos^d\left(\frac{t}{\sqrt{d}}\right)\leq e^x=\exp\left\{-\frac{t^2}{2}\left(1-\frac{\sin(t^*)t}{3\sqrt{d}}\right)\right\}
\leq \exp\left\{-\frac{t^2}{2}\left(1-\frac{a^2}{3}\right)\right\}
\leq e^{-\alpha t^2/2},
\end{equation}
as claimed.

Assume now that $a\sqrt{d}\leq t\leq \sqrt{d}\pi/2$. 
In this case, we have $\cos(t/\sqrt{d})<\cos(a)<1$, hence $\cos^d(t/\sqrt{d})<\cos^d(a)$. 
The result follows.
\end{proof}

\begin{proof}[Proof of Lemma \ref{lemma:varZd}]
The limits are direct consequences of Lemma \ref{lemma:limcov}.

Bounds: 
Let us look at the domination in a neighbourhood of $t=0$. 
First, note that $\rho_d(t)\leq 1$ since it is a correlation; so the sum is finite. 

Consider the factor 
$$2\pi\cdot v_d(t)\cdot p_d(t)=
\frac{1-\cos^{2d}\left(\frac{t}{\sqrt{d}}\right)-d\cos^{2d-2}\left(\frac{t}{\sqrt{d}}\right)\sin^{2}\left(\frac{t}{\sqrt{d}}\right)}
{\left(1-\cos^{2d}\left(\frac{t}{\sqrt{d}}\right)\right)^{3/2}}.$$

By Taylor-Lagrange expansion, as in the proof of Lemma \ref{lemma:limcov}, we can write 
$$1-\cos^{2d}\left(\frac{t}{\sqrt{d}}\right)=1-e^{u}=1-\left(1+u+e^{u^*}\frac{u^2}{2}\right)=-u-e^{u^*}\frac{u^2}{2};$$ 
where 
\begin{align*}
u=2x+\log^{\prime\prime}(1+x^{**})\frac{x^2}{d};\quad x^{**}\in[0,x/d]\subset[0,x];\\
x=-\frac{t^2}{2}+\frac{\sin(t^*)t^3}{6\sqrt{d}};\quad t^*\in[0,t/\sqrt{d}].
\end{align*}

First, we bound $x$; 
since $t$ lies on a neighbourhood of $0$ (assume, for simplicity, that $t<1$), the signature of the sinus is the same as that of $t$. Hence,
$$
-\frac{t^2}{2}\leq x=-\frac{t^2}{2}+\frac{\sin(t^*)t^3}{6\sqrt{d}}
\leq-\frac{t^2}{2}+\frac{t^4}{6d}\leq-\frac{t^2}{2}+\frac{t^4}{6}.
$$

Now, we pass to the second term in the definition of $u$:  
since $x<0$ and thus $x^{**}<0$, we have
$$
-\log^{\prime\prime}(1+x^{**})=\frac{1}{(1+x^{**})^2}\geq 1.
$$
Hence,
$$
-\log^{\prime\prime}(1+x^{**})\frac{x^2}{d}\geq\frac{x^2}{d}
\geq\frac{1}{d}\left(-\frac{t^2}{2}+\frac{t^4}{6}\right)^2.
$$
On the other hand, $x^{**}\geq x\geq -t^2/2$, so $1+x^{**}\geq 1-t^2/2$. 
Hence,
$$
-\log^{\prime\prime}(1+x^{**})\frac{x^2}{d}\leq\frac{1}{(1-t^2/2)^2}(t^2/2)^2
\leq\frac{t^4}{4(1-t^2/2)^2}.
$$
Therefore, putting these bounds together
$$
-t^2-\frac{t^4}{4(1-t^2/2)^2}
\leq u\leq-t^2+\frac{t^4}{3}-\frac{1}{d}\left(-\frac{t^2}{2}+\frac{t^4}{6}\right)^2.
$$
Hence,
$$-t^2-\frac{t^4}{4(1-t^2/2)^2}\leq u\leq-t^2+\frac{t^4}{3}.$$
Finally, using that $e^{u^*}\leq 1$:
$$
t^2-\frac{t^4}{3}-\frac{1}{2}\left(t^2+\frac{t^4}{4(1-t^2/2)^2}\right)^2
\leq -u-\frac{u^2}{2}
\leq 1-\cos^{2d}\left(\frac{t}{\sqrt{d}}\right)
\leq -u
\leq t^2+\frac{t^4}{4(1-t^2/2)^2}.
$$
Thus
\begin{equation*}
{t^2-\frac{t^4}{3}-\frac{1}{2}\left(t^2+\frac{t^4}{4(1-t^2/2)^2}\right)^2\leq 
1-\cos^{2d}\left(\frac{t}{\sqrt{d}}\right)\leq t^2+\frac{t^4}{4(1-t^2/2)^2}}.
\end{equation*}

Similarly, 
we have $\cos^{2d-2}(t/\sqrt{d})\geq\cos^{2d}(t/\sqrt{d})$. 
Besides, 
$$\sin^2(t/\sqrt{d})=\left(\frac{t}{\sqrt{d}}+\frac{\sin(t^{***})t^2}{2d}\right)^2,
\quad t^{***}\in[0,t/\sqrt{d}].$$ 
Hence, using the bound for $1-\cos^{2d}$, we have
\begin{multline*}
-d\cos^{2d-2}\left(\frac{t}{\sqrt{d}}\right)\sin^{2}\left(\frac{t}{\sqrt{d}}\right)
\leq -d\cos^{2d}\left(\frac{t}{\sqrt{d}}\right)\left(\frac{t}{\sqrt{d}}+\frac{\sin(t^{***})t^2}{2d}\right)^2\\
\leq d\left(t^2-1+\frac{t^4}{4(1-t^2/2)^2}\right)\left(\frac{t^2}{d}+\frac{\sin(t^{***})t^3}{d^{3/2}}
+\frac{\sin^2(t^{***})t^4}{4d^2}\right)\\
=-t^2+t^3\left[\left[t^2-1+\frac{t^4}{4(1-t^2/2)^2}\right]
\left[\frac{\sin(t^{***})}{d^{1/2}}+\frac{\sin^2(t^{***})t}{4d}\right]+t+\frac{t^3}{4(1-t^2/2)^2}\right]\\
\leq-t^2+t^4+\frac{t^6}{4(1-t^2/2)^2};
\textrm{ if }t<1.
\end{multline*}
Therefore, taking $t<1$
\begin{multline*}
2\pi\cdot v_d(t)\cdot p_d(t)\leq
\frac{\left(t^2+\frac{t^4}{4(1-t^2/2)^2}\right)+\left(-t^2+t^4+\frac{t^6}{4(1-t^2/2)^2}\right)}
{\left(t^2\left[1-\frac{t^2}{3}-\frac{1}{2}\left(t+\frac{t^3}{4(1-t^2/2)^2}\right)^2\right]\right)^{3/2}}\\
=
\frac{t^4\left[1+\frac{1+t^2}{4(1-t^2/2)^2}\right]}
{t^3\left(1-\frac{t^2}{3}-\frac{1}{2}\left(t+\frac{t^3}{4(1-t^2/2)^2}\right)^2\right)^{3/2}}
=\frac{t\left[1+\frac{1+t^2}{4(1-t^2/2)^2}\right]}
{\left(1-\frac{t^2}{3}-\frac{1}{2}\left(t+\frac{t^3}{4(1-t^2/2)^2}\right)^2\right)^{3/2}}.
\end{multline*}
This gives an integrable (at $0$) upper bound.

Besides, Lemma \ref{lemma:limcov} shows that the convergences are uniform in compacts. 

Finally, it rests to obtain an integrable upper bound for large $t$ (but recall that $t\in[0,\sqrt{d}\pi/2]$).
Assume that $t\geq t_0$. 
For $t<a\sqrt{d}$ we can use the bound $\cos^d(t/\sqrt{d})\leq e^{-\alpha t^2/2}$. 
Then, the upper bound is easy to obtain: 
\begin{align*}
2\pi\cdot v_d(t)\cdot p_d(t)
&\leq \frac{1+(1+t^2)e^{-\alpha t^2}}{(1-e^{-\alpha t^2})^{3/2}},\\
|\rho_d(t)|&\leq \frac{e^{-\alpha t^2/2}}{\cos^2(a)}
\frac{1+t^2+e^{-\alpha t^2}}
{1-e^{-\alpha t^2}-t^2e^{-\alpha t^2}}.
\end{align*}

For $t>a\sqrt{d}$ is similar. 
Let $s=t/\sqrt{d}$. 
Let us start with the correlation $\rho_d(t)$. 
Since $\cos(s)\leq \cos(a)$, we have
$$
-d\cos^{2d-2}(s)\sin^2(s)+1-\cos^{2d}(s)
\geq-d\cos^{2d-2}(a)\sin^2(a)+1-\cos^{2d}(a)\to_d1, 
$$
so, the demominator in $\rho_d(t)$ is positive for $d$ large enough. 
Thus 
\begin{multline}\label{eq:cotacos}
|\rho_d(t)|=\cos^{d-2}(s)
\frac{|1-d\sin^2(s)-\cos^{2d}(s)|}
{1-\cos^{2d}(s)-d\cos^{2d-2}(s)\sin^2(s)}\\
\leq\cos^{d-2}(s)
\frac{|1-d\sin^2(s)-\cos^{2d}(s)|}
{1-d\sin^2(s)-\cos^{2d}(s)}=\cos^{d-2}(s)\leq \cos^{d-2}(a).
\end{multline}

Similarly, since $\cos(s)<\cos(a)$ in this region, we know that the conditional variance and the density (in this region)
are bounded by constants (for instance, $1$ and $(2\pi\sqrt{1-\cos^{2d}(a)})^{-1}$ respectively).

This gives the necessary domination for the cases $\ell\geq 1$ 
and concludes the proof of the Lemma. 
\end{proof}

\end{document}